\numberwithin{equation}{section}
\newtheorem{theorem}{Theorem}
\newtheorem{lemma}{Lemma}
\newtheorem{proposition}{Proposition}
\def\ve{\varepsilon}
\def\vp{\varphi}
\def\arrowk{^\to{\kern -6pt\topsmash k}}
\def\arrowK{^{^\to}{\kern -9pt\topsmash K}}
\def\arrowt{^\to{\kern -6pt\topsmash t}}
\def\arrowr{^\to{\kern-6pt\topsmash r}}
\def\arrowvp{^\to{\kern -8pt\topsmash\vp}}
\def\tk{\tilde{\kern 1 pt\topsmash k}}
\def\barm{\bar{\kern-.2pt\bar m}}
\def\barN{\bar{\kern-1pt\bar N}}
\def\barA{\, \bar{\kern-3pt \bar A}}
\def\be{\begin{equation}}
\def\ee{\end{equation}}
\def\ve{\varepsilon}
\def\iint{\not\kern -4pt\int}
\def\iiint{{\small{_\not}}\kern-3.5pt\int}
\def\mod{\text{\rm mod }}
\def\be{\begin{equation}}
\def\ee{\end{equation}}
\numberwithin{equation}{section}
\begin{document}
\date{}
\title
{Moebius Schr\"odinger}
\author
{{J.~Bourgain}\\
{Institute for Advanced Study}\\
{1 Einstein Drive}\\
{Princeton, NJ 08540}}
\maketitle

\noindent
{\bf (1).} Let $\mu (n)$ be the Moebius function and consider the Schr\"odinger operator on $\mathbb Z_+$
$$
H=\Delta+\lambda\mu \qquad (\lambda\not=0 \text { arbitrary}).\eqno{(1.0)}
$$
We prove the following
\medskip

\noindent
\begin{theorem}\label{Theorem1} {\it For $E\in\mathbb R$ outside a set of 0-measure, any solution $\psi=(\psi_n)_{n\geq 0}$, $\psi_0=0, \psi\not= 0$  of
$$
H\psi = E\psi
$$
satisfies
$$
\overline{\lim} \ \frac{\log^+|\psi_n|}n >0.\eqno{(1.1)}
$$}
\end{theorem}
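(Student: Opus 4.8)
The plan is to recast the problem in terms of the $SL_2(\mathbb{R})$ transfer cocycle and to reduce the statement (1.1) to the non-perturbative positivity of the Lyapunov exponent. Writing the eigenvalue equation as $\psi_{n+1}=(E-\lambda\mu(n))\psi_n-\psi_{n-1}$, set
$$
A_n(E)=\begin{pmatrix} E-\lambda\mu(n) & -1\\ 1 & 0\end{pmatrix}\in SL_2(\mathbb{R}),
\qquad M_N(E)=A_N(E)\cdots A_1(E).
$$
The Dirichlet solution ($\psi_0=0$, $\psi_1=1$) satisfies $(\psi_{N+1},\psi_N)^{\mathsf T}=M_N(E)(1,0)^{\mathsf T}$, and each $\psi_N(E)$ is a monic polynomial of degree $N-1$ whose zeros are the Dirichlet eigenvalues of the truncation $H_N$. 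First I would show that (1.1) is equivalent to $|\{E:L(E)=0\}|=0$, where $L(E)=\overline{\lim}_N\frac1N\log\|M_N(E)\|$. Indeed $\max(|\psi_N|,|\psi_{N+1}|)\geq\frac1{\sqrt2}\|M_N(E)e_1\|$, and the only way $\frac1N\log\|M_N e_1\|$ can fail to track $L(E)$ is that $e_1$ asymptotically aligns with the contracting direction of the cocycle, which happens only on a null set of energies (essentially the point spectrum). Conversely, on $\{L=0\}$ the solution is subexponential, so (1.1) fails there.

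Next I would install the subharmonic framework. The functions $u_N(E)=\frac1N\log\|M_N(E)\|$ are subharmonic in $E\in\mathbb{C}$, locally uniformly bounded, and behave like $\log|E|$ at infinity; their Riesz measures are the normalised eigenvalue-counting measures $\rho_N$, which converge weakly to the density of states $k$, and the Thouless formula gives $L(E)=\int\log|E-E'|\,dk(E')$. The crucial point is that positivity of $L$ cannot be soft: for a periodic (or smooth almost periodic) potential $L\equiv0$ on the bands, so (1.1) would fail there; hence the aperiodicity and pseudorandomness of $\mu$ must enter. A first, purely qualitative input is that $V=\lambda\mu$ takes finitely many values and is not eventually periodic, so by Remling's oracle theorem the absolutely continuous spectrum is empty; but this only yields $\Sigma_{ac}=\emptyset$, which is strictly weaker than $|\{L=0\}|=0$ (singular spectrum could a priori live on a flat set of positive measure).

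The hard part, and the heart of the matter, is to upgrade absence of a.c.\ spectrum to genuine a.e.\ positivity of $L$. I would argue by contradiction: if $S=\{L=0\}$ had positive measure, then on $S$ the cocycle $M_N$ grows subexponentially, and subharmonicity of $u_N$ forces this near-flatness to persist on a complex neighbourhood of a density point of $S$. Against this I would set a lower bound for an averaged exponent $\frac1N\int\log\|M_N(E)\|\,d\nu(E)\geq c>0$. Here the number theory enters: the plain Lebesgue average is insensitive to $\mu$ (it is governed by the behaviour at infinity), so one must extract honest exponential growth from cancellation in the M\"obius correlations---sums of the type $\sum_n\mu(n)e(n\alpha)$ and short-interval or bilinear averages of $\mu$ (in the spirit of Davenport and Matom\"aki--Radziwi{\l}{\l})---which encode the effective independence of the signs $\mu(n)$ along the orbit. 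Feeding such cancellation into a Furstenberg/large-deviation estimate for the cocycle is intended to produce the averaged positivity, and combining it with the subharmonic spreading then excludes a positive-measure flat set.

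I expect the main obstacle to be precisely this quantitative step: showing that the deterministic sequence $\mu$ is ``random enough'' that the transfer matrices grow exponentially for a.e.\ $E$, i.e.\ converting arithmetic cancellation into a uniform-in-$E$ lower bound on $\frac1N\log\|M_N\|$. The delicate regime is $E\in(-2,2)$, where the unperturbed cocycle is elliptic (a rotation) and only the accumulated M\"obius kicks can generate hyperbolicity; controlling this regime, rather than the large-$|E|$ or large-$|\lambda|$ cases, is where the pseudorandomness of $\mu$ will have to be used in an essential way.
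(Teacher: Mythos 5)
Your reduction of (1.1) to almost-everywhere positivity of the Lyapunov exponent is a reasonable opening move, and you correctly observe that Remling's theorem only yields $\sigma_{ac}=\emptyset$, which by itself is weaker than what is needed. But at the decisive step --- upgrading absence of a.c.\ spectrum to $|\{E:L(E)=0\}|=0$ --- you propose to extract exponential growth of the cocycle from cancellation in M\"obius sums (Davenport, Matom\"aki--Radziwi{\l}{\l}, bilinear estimates), and you yourself concede that you do not know how to carry this out. This is a genuine gap, and it is exactly the point where the paper does something quite different and much softer. The paper passes to the hull $X$ of the shifted M\"obius sequence, takes a weak$^*$ limit $\nu$ of the empirical measures $\frac1N\sum_{j<N}\delta_{T^j\overline\omega}$, decomposes $\nu$ into ergodic components $\nu_\alpha$, and applies \emph{Kotani theory} (Simon's version for stochastic Jacobi matrices) to each component: if $\gamma_\alpha(E)=0$ on a set of positive Lebesgue measure, there would be a.c.\ spectrum $\nu_\alpha$-a.s., which is excluded by Remling's theorem once one knows that no element of the hull is eventually periodic (except trivially). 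That non-periodicity is the \emph{only} arithmetic input, and it is elementary: along any arithmetic progression of difference $d$ one can hit a multiple of $p^2$ for a prime $p\in(d,10d)$, forcing $\mu=0$ there. No cancellation estimates for $\mu(n)$ enter anywhere; your proposed route would make the proof contingent on quantitative pseudorandomness that is not available in the required pointwise-in-$E$ form.

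A second, subtler gap: even granting $\gamma_\alpha(E)>0$ for a.e.\ $E$ and a.e.\ $\alpha$, this concerns $\nu_\alpha$-typical points of the hull, and the actual sequence $\mu$ is a single, not necessarily generic, point; moreover $\nu$ need not be ergodic, so no ergodic theorem applies along $\overline\omega$ itself, and one cannot assume $\lim_N\frac1N\log\|M_N(E)\|$ exists for the M\"obius potential (your appeal to ``alignment with the contracting direction'' implicitly needs Ruelle/Oseledets, i.e.\ an actual limit). The paper bridges this by converting the measure statement $\nu(\Omega_{E,\delta,b,N})>1-\ve$ into a positive-density-in-$k$ statement for good Green's functions $G_{[k,k+N[}(E)$ of the true potential $\lambda\mu$, and then invoking a deterministic patching lemma (Lemma 6.1/Corollary 6.54 of \cite{B}, resolvent identity plus energy perturbation) to produce off-diagonal decay of $G_{[1,\ell]}(E)(x,\ell)$ for $E$ outside a set of measure $<3\ve$; the inequality $|\psi_x|\le|\psi_{\ell+1}|\,|G_{[1,\ell]}(E)(x,\ell)|$ then forces exponential growth. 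Your subharmonicity/averaged-exponent scheme does not substitute for either of these two steps as written.
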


Recalling the spectral theory of 1D Schr\"odinger operators with a random potential, Theorem \ref{Theorem1} fits the general heuristic, known as the
`Moebius randomness law' (cf. \cite{Sa}).
The question whether (1.0) satisfies Anderson localization remains open and is probably difficult.

The fact that $H$ has no ac-spectrum is actually immediate from the following result of Remling.

\begin{proposition}\label{Proposition1} \text {\rm ([R], Theorem 1.1)}:
Suppose that the (half line) potential $V(n)$ takes only finitely many values and $\sigma_{ac} \not= \phi$.
Then $V$ is eventually periodic.
\end{proposition}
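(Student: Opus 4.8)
The plan is to establish this through Remling's oracle theorem, whose content is that absolutely continuous spectrum forces the potential to become asymptotically deterministic; the finiteness of the value set then collapses that determinism to eventual periodicity. I would organize the argument into an analytic core (the oracle), a discretization step, and a combinatorial endgame.

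For the analytic core I would pass to the whole-line \emph{right limits} of $V$, that is, two-sided potentials $W$ obtained as pointwise limits of shifts $V(\cdot + n_j)$ along subsequences $n_j \to \infty$. Writing $\Sigma$ for the essential support of the a.c.\ part (a set of positive Lebesgue measure, since the a.c.\ spectrum is assumed nonempty), the crucial structural fact is that every right limit $W$ is \emph{reflectionless} on $\Sigma$: the half-line Weyl functions of $H_W$ satisfy $m_+(E+i0) = -\overline{m_-(E+i0)}$ for a.e.\ $E \in \Sigma$. This is the deep ingredient, and I would derive it from the Breimesser--Pearson description of the boundary behaviour of $m$-functions, which makes the a.c.\ part of the spectral measure asymptotically invariant under shifts in a weak-$*$ sense and identifies the limit with the reflectionless condition.

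Next I would convert ``reflectionless'' into a prediction rule. Knowing the left half of $W$ determines $m_-$, and the reflectionless relation then prescribes $m_+$ a.e.\ on the positive-measure set $\Sigma$; since $m_+$ is a Herglotz function and a function of bounded type that vanishes on a positive-measure subset of the boundary is identically zero (Luzin--Privalov uniqueness), $m_+$ is determined globally, hence so is the right half of $W$ by inverse spectral theory. Thus, for reflectionless $W$, the past determines the future. A compactness argument on the shift-orbit closure of $V$---compact precisely because the alphabet $A$ is finite---upgrades this to a uniform, continuous oracle: for every $\varepsilon>0$ there are integers $L$ and $N$ and a function $\Delta$ with $|V(n)-\Delta(V(n-1),\dots,V(n-L))|<\varepsilon$ for all $n>N$.

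Finally the endgame. Choosing $\varepsilon$ below half the minimal gap between distinct values of $V$, the oracle's prediction rounded to the nearest admissible value reproduces $V(n)$ exactly for $n>N$, so $V(n)=f(V(n-1),\dots,V(n-L))$ for a fixed $f:A^L\to A$ and all large $n$. The state vector $s_n=(V(n-1),\dots,V(n-L))$ then evolves deterministically in the finite set $A^L$; by pigeonhole some state recurs, and determinism propagates the recurrence, so $(s_n)$, and with it $V$, is eventually periodic. The main obstacle is squarely the first step---proving that nonempty a.c.\ spectrum forces every right limit to be reflectionless on $\Sigma$---which is where the entire weight of the $m$-function boundary theory sits; once a continuous oracle is available the discretization and combinatorics are soft, the only care being to guarantee that the oracle is genuinely continuous and that its memory length $L$ can be taken finite, both of which follow from compactness of the finite-alphabet orbit closure.
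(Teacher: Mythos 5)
The paper offers no proof of this proposition: it is quoted verbatim as Theorem 1.1 of Remling's paper [R] and used as a black box. What you have written is, in outline, a faithful reconstruction of Remling's own argument: the passage to right limits, the theorem that nonempty a.c.\ spectrum forces every right limit to be reflectionless on the essential support $\Sigma_{ac}$ (proved via the Breimesser--Pearson analysis of boundary values of $m$-functions), the uniqueness step showing that for a reflectionless whole-line operator the left half determines $m_-$, hence $m_+$ a.e.\ on a positive-measure set, hence $m_+$ everywhere and with it the right half, the compactness upgrade to a continuous oracle, and finally the finite-alphabet pigeonhole giving eventual periodicity. So the proposal is correct in structure and matches the intended source rather than anything in the present paper.

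Two points deserve flagging as the places where your sketch is genuinely a sketch rather than a proof. First, as you acknowledge, the reflectionless property of right limits is the entire analytic content; ``derive it from Breimesser--Pearson'' compresses the hardest part of Remling's paper into a clause, and nothing in your outline would let a reader reconstruct it. Second, the step from ``every right limit satisfies an exact prediction rule'' to ``$V$ itself satisfies an approximate prediction rule for all large $n$'' is not automatic: it requires a compactness-and-contradiction argument on the shift-orbit closure (if no single $(L,N)$ worked, one could extract a right limit violating determinism), together with the observation that on a finite alphabet the orbit closure is compact and the oracle, being continuous there, is uniformly continuous. You gesture at both, and for a finite alphabet they do go through, so the endgame (round the prediction, get a deterministic evolution of the state vector in the finite set $A^L$, apply pigeonhole) is sound. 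In short: a correct high-level account of [R, Theorem 1.1], with the weight resting entirely on the uncited analytic core.
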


We will use again Proposition 1 later on, in the proof of the Theorem.

\bigskip

\noindent
{\bf (2).}  Let $X\subset \{0, 1, -1\}^{\mathbb Z}$ be the point-wise closure of the set $\{T^j\overline\omega; j\in\mathbb Z\}$, where $T$ is the left shift and
$\overline\omega$ defined by
$$
\overline\omega_n= \begin{cases} \mu(n) \text { for } n\in\mathbb Z_+\\ 0\text { for } n\in\mathbb Z_-.\end{cases}\eqno{(2.1)}
$$
Let
$$
\nu_N =\frac 1N\sum_{j=0}^{N-1} \delta_{T^j \overline{\omega}} \qquad (\delta_x =\text { Dirac measure at $x$)}
$$
and $\nu \in\mathcal P(X)$ a weak$^*$-limit point of $\{\nu_N\}$.

Then $\nu$ is a $T$-invariant probability measure on $X$.

The only property of the Moebius function exploited in the proof of Theorem \ref{Theorem1} is the following fact.

\begin{lemma} \label{Lemma2}
For no element $\omega\in X, (\omega_n)_{n\geq 0}$ is eventually periodic, unless $\omega_n=0$ for $n$ large enough. Similarly for $(\omega_n)_{n\leq 0}$.
\end{lemma}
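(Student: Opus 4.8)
The plan is to argue by contradiction, exploiting the single arithmetic property of $\mu$ that really matters here: $\mu(m)=0$ whenever $m$ is divisible by the square of a prime. Suppose some $\omega\in X$ has $(\omega_n)_{n\geq 0}$ eventually periodic, say $\omega_{n+p}=\omega_n$ for all $n\geq N_0$ with period $p\geq 1$, but $(\omega_n)_{n\geq 0}$ is \emph{not} eventually zero. On each residue class $a\bmod p$ the value $\omega_{a+mp}$ is constant once $a+mp\geq N_0$, so the failure of ``eventually zero'' yields a residue $a$ and a sign $\varepsilon\in\{+1,-1\}$ with $\omega_n=\varepsilon$ for every $n\equiv a \pmod p$, $n\geq N_0$.

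Next I would unpack membership in $X$. Since $X$ is the pointwise closure of $\{T^j\overline\omega\}$, there is a sequence $j_k$ with $T^{j_k}\overline\omega\to\omega$ coordinatewise; because the coordinates lie in the discrete set $\{0,1,-1\}$, pointwise convergence means eventual equality on every finite window. Thus for each $M$ there is $K=K(M)$ so that for all $k\geq K$ one has $\overline\omega_{n+j_k}=\omega_n$ for $0\leq n\leq M$. In particular, for those $n\equiv a\pmod p$ with $N_0\leq n\leq M$ we get $\overline\omega_{n+j_k}=\varepsilon\neq 0$; since $\overline\omega$ vanishes on $\mathbb Z_-$ and equals $\mu$ on $\mathbb Z_+$, this forces $n+j_k\geq 1$ and $n+j_k$ squarefree.

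The contradiction then comes from a Chinese–remainder count. Fix any prime $q\nmid p$, so that $p$ is invertible modulo $q^2$. As $n$ runs through the integers $\equiv a\pmod p$ in $[N_0,M]$, the shifted values $n+j_k$ form an arithmetic progression of common difference $p$; choosing $M\geq N_0+p\,q^2$ guarantees at least $q^2$ consecutive terms of that progression, and these cover all residues modulo $q^2$. Hence one of them is $\equiv 0\pmod{q^2}$ and so is not squarefree, giving $\mu(n+j_k)=0$ for that $n$, contradicting $\overline\omega_{n+j_k}=\varepsilon\neq 0$. The statement for $(\omega_n)_{n\leq 0}$ follows by the identical argument on windows $[-M,0]$: nonzero limiting values again force the relevant $n+j_k$ to be positive and squarefree, and the same progression-cum-CRT count produces a multiple of $q^2$, so the two cases are symmetric.

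I expect the only point needing care — rather than a genuine obstacle — to be bookkeeping the quantifiers: the convergence $T^{j_k}\overline\omega\to\omega$ is merely pointwise, so one must first fix the window size $M$ (chosen in terms of $p$ and $q$) and only afterwards pass to $k\geq K(M)$; and one must keep track that any nonzero limiting coordinate automatically places its argument $n+j_k$ in $\mathbb Z_+$, where $\overline\omega$ genuinely coincides with $\mu$. Everything else is elementary.
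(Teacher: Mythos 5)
Your proof is correct and follows essentially the same route as the paper's: both exploit that $\mu$ vanishes on multiples of $q^2$ and that an arithmetic progression whose common difference is coprime to $q$ covers all residues mod $q^2$ once it has $q^2$ terms, after transferring the question to a long finite window where $\omega$ agrees with a shift of $\mu$. The only differences are cosmetic (you derive a contradiction at a single nonvanishing residue class with an arbitrary prime $q\nmid p$, while the paper fixes a prime in $(d,10d)$ and shows the whole tail vanishes), so no further comparison is needed.
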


\begin{proof}
Suppose $\omega$ eventually periodic. Hence there is $n_0\in\mathbb Z_+$ and $d\in\mathbb Z_+$ such that
$$
\omega(n+d) =\omega(n) \text { for } n\geq n_0.\eqno{(2.2)}
$$
Take $N=10^3 (n_1+d^3)$ and choose $n_1 \geq n_0$ and $k\in\mathbb Z_+$ such that
$$
\omega(n) =\mu(k+n) \text { for } n\in [n_1, n_1+N].\eqno{(2.3)}
$$
Let $d<p<10d$ be a prime.
Taking $n\in [n_1, n_1+ \frac N2]$, there is $0\leq j<p^2$ such that $k+n+jd\equiv 0 (\mod p^2)$ and thus $\mu(k+n+jd)=0$.
Since $n+jd\in [n_1, n_1+N]$, (2.3), (2.2) imply that $\mu(k+n+jd)=\omega(n+jd)=\omega(n)$ and therefore $\omega =0$ on $[n_1, n_1+\frac N2]$, hence on
$[n_1, \infty [ $.\end{proof}

Denote for $\omega\in X$
$$
H_\omega =\Delta+\lambda\omega.\eqno{(2.4)}
$$
Combined with Proposition \ref{Proposition1}, Lemma \ref{Lemma2} implies

\begin{lemma}\label{Lemma3}
$$
\sigma_{ac} (H_\omega) =\phi\qquad (\nu  - \text {a.e.})
$$
\end{lemma}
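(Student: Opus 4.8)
The plan is to use Proposition~\ref{Proposition1} and Lemma~\ref{Lemma2} to confine the set of $\omega$ carrying absolutely continuous spectrum to an explicit ``eventually zero'' set, and then to show that set is $\nu$-null by an elementary sieve estimate on the squarefree pattern of $\mu$. First note that for every $\omega\in X$ the potential $n\mapsto\lambda\omega_n$ of the whole-line operator $H_\omega$ takes values in the finite set $\{0,\lambda,-\lambda\}$. Writing $H_\omega^{\pm}$ for the two half-line restrictions and invoking the standard fact that the essential support of $\sigma_{ac}$ of a whole-line operator is, up to Lebesgue-null sets, the union of those of $H_\omega^{+}$ and $H_\omega^{-}$, we get that $\sigma_{ac}(H_\omega)\neq\phi$ forces $\sigma_{ac}(H_\omega^{+})\neq\phi$ or $\sigma_{ac}(H_\omega^{-})\neq\phi$. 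Each half-line potential again takes finitely many values, so Proposition~\ref{Proposition1} makes the corresponding one-sided restriction of $n\mapsto\lambda\omega_n$, hence (since $\lambda\neq0$) of $\omega$ itself, eventually periodic; Lemma~\ref{Lemma2} then upgrades this to ``eventually $0$.'' Consequently
\[
\{\omega\in X:\ \sigma_{ac}(H_\omega)\neq\phi\}\ \subseteq\ A^{+}\cup A^{-},
\]
where $A^{+}=\{\omega:\ \omega_n=0\ \text{for all large }n\}$ and $A^{-}=\{\omega:\ \omega_n=0\ \text{for all }n\le-n_0,\ \text{some }n_0\}$. Both are Borel, so it suffices to show $\nu(A^{+})=\nu(A^{-})=0$.

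The next step exploits $T$-invariance of $\nu$. Putting $A^{+}_{m}=\{\omega:\ \omega_n=0\ \forall n\ge m\}$, one checks $A^{+}_{m}=T^{-m}A^{+}_{0}$, so $\nu(A^{+}_{m})=\nu(A^{+}_{0})$ for all $m$; since $A^{+}_{m}\uparrow A^{+}$ this gives $\nu(A^{+})=\nu(A^{+}_{0})$. Now $A^{+}_{0}=\bigcap_{m}B_m$ with $B_m=\{\omega:\ \omega_0=\cdots=\omega_m=0\}$ a clopen cylinder, so weak$^{*}$ convergence of the $\nu_N$ evaluated on $B_m$ yields
\[
\nu(B_m)=\lim_{N}\frac1N\,\#\{0\le j<N:\ \mu(j)=\mu(j+1)=\cdots=\mu(j+m)=0\}=:d_{m+1},
\]
the asymptotic density of integers that begin a run of $m+1$ consecutive non-squarefree numbers. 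Hence $\nu(A^{+})=\lim_m d_{m+1}$, and the identical argument (using the same invariance and cylinder reduction, reading the window backwards, where the finitely many boundary indices with $\overline\omega_{j-i}=0$ by definition are negligible in density) gives $\nu(A^{-})=\lim_m d_{m+1}$ as well.

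The heart of the matter, and the step I expect to be the main obstacle, is thus the purely arithmetic claim that $d_\ell\to0$ as $\ell\to\infty$, i.e. long runs of consecutive non-squarefree integers have vanishing density. I would establish it by a sieve/counting argument. If $n,n+1,\dots,n+\ell-1$ are all non-squarefree, each is divisible by $p^{2}$ for some prime $p$. Primes with $p\le\sqrt\ell$ cover at most $\sum_{p\le\sqrt\ell}(\ell/p^{2}+1)\le(\sum_p p^{-2})\,\ell+\pi(\sqrt\ell)$ of the $\ell$ positions, and since $\sum_p p^{-2}<\tfrac12$ this is $\le0.46\,\ell$ for large $\ell$; so at least $\tfrac12\ell$ positions must be divisible by $p^{2}$ for some prime $p>\sqrt\ell$, and each such $p$ divides at most one of the $\ell$ positions. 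Thus $n$ must satisfy $k:=\lceil\tfrac12\ell\rceil$ congruences modulo distinct prime squares $p^{2}>\ell$; summing the density $\prod_j p_j^{-2}$ over all admissible choices of these primes and their positions bounds
\[
d_\ell\ \le\ \frac1{k!}\Big(\ell\sum_{p>\sqrt\ell}p^{-2}\Big)^{k}\ \le\ \frac1{k!}\,(C\sqrt\ell)^{k},
\]
which tends to $0$ super-exponentially as $\ell\to\infty$. This yields $\nu(A^{+})=\nu(A^{-})=0$, and therefore $\sigma_{ac}(H_\omega)=\phi$ for $\nu$-a.e.\ $\omega$, proving Lemma~\ref{Lemma3}.
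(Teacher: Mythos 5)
Your reduction is exactly the paper's: split $H_\omega$ into the two half-line operators, invoke Remling's theorem (Proposition \ref{Proposition1}) since the potential takes finitely many values, and upgrade ``eventually periodic'' to ``eventually zero'' via Lemma \ref{Lemma2}, so that the set carrying ac spectrum is contained in $\bigcup_k\{\omega:\omega_n=0\ \forall n\ge k\}\cup\{\omega:\omega_n=0\ \forall n\le -k\}$, which is precisely the paper's set (2.5). The paper then simply asserts ``Clearly $\nu(2.5)=0$''; your contribution is to prove this, and your route --- shift-invariance to collapse the increasing union to a single cylinder, weak$^*$ convergence on clopen cylinders to identify $\nu(B_m)$ with the density $d_{m+1}$ of starting points of runs of consecutive non-squarefree integers, and an arithmetic estimate showing $d_\ell\to0$ --- is the right one. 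Some arithmetic input genuinely is needed at this point: the all-zero sequence lies in $X$ (there are arbitrarily long runs of non-squarefree integers by CRT), it is a fixed point of $T$, and soft ergodic-theoretic arguments alone only reduce $\nu(2.5)$ to $\nu(\{\mathbf 0\})$, which is controlled exactly by $\lim_m d_m$.

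One step does not hold up as written: the bound $d_\ell\le\frac1{k!}\bigl(\ell\sum_{p>\sqrt\ell}p^{-2}\bigr)^k$ obtained by ``summing the density over all admissible choices'' of $k$ prime squares and positions. Upper density is not countably subadditive, and if one counts $n\le N$ honestly, each congruence class modulo $\prod_i p_i^2$ contributes an $O(1)$ error term while the number of admissible tuples with $p_i\le\sqrt{N+\ell}$ grows like a power of $N$ exceeding $N$ once $k\ge 3$; the error terms swamp the main term and the stated inequality does not follow from this argument. The repair is to run your own counting as a first-moment (Markov) estimate rather than a $k$-fold union bound: the average over $n\le N$ of the number of positions $j<\ell$ with $n+j$ divisible by $p^2$ for some $p>\sqrt\ell$ is at most $\ell\sum_{p>\sqrt\ell}p^{-2}+o_N(1)\le 2\sqrt\ell+o_N(1)$, while your (correct) small-prime computation shows every $n\in S_\ell$ contributes at least $k\ge\ell/2$ such positions; hence $d_\ell\le 4/\sqrt\ell+o(1)\to0$, which is all the lemma needs.
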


\begin{proof}
Denoting $H^\pm_{\omega}$ the corresponding halfline $SO$'s, we have
$$
\sigma_{ac} (H_\omega)=\sigma_{ac} (H^+_\omega)\cup\sigma _{ac} (H^-_\omega)
$$
and these sets are empty, unless
$$
\omega\in \bigcup^\infty_{k=1} \{\omega\in X; \omega_n=0 \text { for all } n\geq k \text{ or all } n\leq - k\}.\eqno{(2.5)}
$$
Clearly $\nu \, (2.5)=0$.
\end{proof}

The measure $\nu$ need not be $T$-ergodic, so we consider its ergodic decomposition
$$
\nu =\int\nu_\alpha d\alpha.\eqno{(2.6)}
$$
For each $\alpha$, let $\gamma_\alpha(E)$ be the Lyapounov exponent of $H_\omega$, i.e.
$$
\gamma_\alpha(E) =\lim_{N\to\infty} \frac 1N\log \Big\Vert \prod^0_N\begin{pmatrix} E-\lambda\omega_n&-1\\ 1& 0\end{pmatrix}\Big\Vert
\quad (\nu_\alpha \, a.e).\eqno{(2.7)}
$$

Next, we apply Kotani's theorem (for stochastic Jacobi matrices, as
proven in \cite {Si}, Theorem 2).

\begin{proposition}\label{Proposition4}
(assuming $(\Omega,\mu, T)$ ergodic).

If $\gamma(E)=0$ on a subset $A$ of $\mathbb R$ with positive Lebesque measure, then $E_\omega^{ac} (A) \not= 0$ for a.e. $\omega$.
\end{proposition}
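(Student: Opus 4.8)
The plan is to follow Kotani's theory by complexifying the energy and relating the Lyapunov exponent to the Weyl $m$-functions of the two half-line restrictions of $H_\omega$. Write $T_N(z)$ for the complexification of the transfer-matrix product in $(2.7)$, with $E$ replaced by $z\in\mathbb{C}^+$, and set $\gamma(z)=\lim_N \frac1N\int \log\|T_N(z)\|\,d\mu$. For $z$ in the upper half-plane the matrix entries are analytic, so each $\frac1N\log\|T_N(z)\|$ is subharmonic and locally uniformly bounded; passing to the limit (justified by the subadditive ergodic theorem together with a normal-families / Vitali argument) shows $\gamma(z)$ is harmonic and nonnegative on $\mathbb{C}^+$, and that its boundary value $\gamma(E+i0)$ coincides with the real Lyapunov exponent $\gamma(E)$ for a.e.\ $E$.

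Next I would introduce the half-line $m$-functions $m_\pm(z,\omega)$, which are Herglotz in $z\in\mathbb{C}^+$ and transform under the shift $T$ through a Riccati recursion. The heart of the argument is Kotani's identity, which expresses $\gamma(z)$ as an ergodic average of the $m$-functions, schematically $\gamma(z)=-\frac12\int \log\bigl|\operatorname{Im} m_+(z,\omega)\bigr|\,d\mu + (\text{harmonic terms})$, obtained by integrating the Riccati relation against the $T$-invariant measure $\mu$ and using ergodicity to replace spatial averages by expectations. Combined with the Thouless formula relating $\gamma$ to the integrated density of states, this realizes $\gamma$ as the real part of an analytic, indeed Herglotz-type, function $w(z)$ on $\mathbb{C}^+$, whose boundary behavior is thereby controlled.

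From here the conclusion is extracted as follows. On the set $A$ where $\gamma(E)=0$, the identity forces $\operatorname{Im} m_+(E+i0,\omega)>0$ for a.e.\ $(E,\omega)\in A\times\Omega$, since a vanishing harmonic boundary value of $\gamma$ together with the Herglotz representation of $w$ is incompatible with $\operatorname{Im} m_+$ degenerating on a positive-measure subset of $A$. By the spectral theorem for the half-line operator, the a.c.\ part of the spectral measure $E_\omega^{ac}$ has density proportional to $\operatorname{Im} m_+(E+i0,\omega)$; hence $\operatorname{Im} m_+>0$ a.e.\ on $A$ yields $E_\omega^{ac}(A)\neq 0$ for a.e.\ $\omega$, which is the assertion.

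The main obstacle is the rigorous derivation of Kotani's identity: one must justify interchanging the limit defining $\gamma$ with the spatial average, control the boundary limits $y\to0$ (showing $\gamma(E)=\gamma(E+i0)$ and that $\operatorname{Im} m_\pm(E+iy)$ does not collapse as $y\to0$ on $A$), and invoke the ergodicity of $(\Omega,\mu,T)$ precisely to turn the Birkhoff averages of the Riccati cocycle into the deterministic harmonic function $w$. These boundary-value and subharmonicity estimates, rather than the final spectral-theoretic step, constitute the technical core; the remaining steps are then formal consequences.
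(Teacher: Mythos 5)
The paper gives no proof of this proposition: it is quoted directly from Theorem 2 of \cite{Si} (Kotani theory for stochastic Jacobi matrices), whose argument is exactly the route you outline --- complexified Lyapunov exponent, harmonicity and subharmonicity in $\mathbb{C}^+$, Herglotz $m_\pm$-functions and the Riccati/shift relation, an identity expressing $\gamma$ through $m_+$, and the conclusion $\operatorname{Im} m_+(E+i0,\omega)>0$ a.e.\ on $A$ giving $E^{ac}_\omega(A)\neq 0$. Your sketch is therefore essentially the same approach as the cited proof; the one imprecision worth noting is that the discrete Kotani identity is (up to normalization) $\gamma(z)=-\mathbb{E}\bigl[\log|m_+(z,\omega)|\bigr]$, with $\operatorname{Im} m_+$ entering only through the $y$-derivative of $\gamma$ and the Herglotz structure rather than through a $\log\bigl|\operatorname{Im} m_+\bigr|$ average as in your schematic formula.
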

\medskip

($E^{ac}$ denote the projection on the $ac$-spectrum).
\medskip

Apply Proposition \ref{Proposition4}  to $H_\omega$ on $(X, \nu_\alpha)$.
By Lemma \ref{Lemma3}, $E_\omega^{ac}=0$, $\nu_\alpha$ a.e., hence $\{E\in\mathbb R; \gamma_\alpha(E)=0\}$ is a set of zero Lebesgue measure.
For $E$ outside a subset $\mathcal E_*\subset\mathbb R$ of zero Lebesque measure, we have that $\gamma_\alpha(E)>0$ for almost all $\alpha$ in (2.6), therefore
$$\begin{aligned}
&\liminf_{N \rightarrow \infty} \int \frac 1N \log\Big\Vert \prod^0_N\begin{pmatrix} E-\lambda\omega_n &-1\\ 1& 0\end{pmatrix} \Big\Vert\nu (d\omega)\geq\\
&\int\Big\{\liminf_{N \rightarrow \infty} \int \Big[\frac 1N
\log\Big\Vert\prod^0_N\begin{pmatrix} E-\lambda\omega_n & -1\\
1&0\end{pmatrix} \Big\Vert\Big] \nu_\alpha (d\omega) \Big\}
d\alpha\geq
\end{aligned}
$$
$$
\qquad\qquad \int\gamma_\alpha (E) d\alpha>0.
\eqno{(2.8)}
$$

Denoting $R_N$ the restriction operator to $[1, N]$, let
$$
H_\omega^{(N)} = R_NH_\omega R_N
$$
$$
G_\omega^{(N)} (E)= (H^{(N)}_\omega -E+i0)^{-1} \qquad \text {(= restricted Green's function)}.
$$

Recall that by Cramer's rule, for $1\leq k_1\leq k_2\leq N$
$$
|G_\omega^{(N)} (E) (k_1, k_2)| = \frac {\det [H^{(k_1-1)}_\omega -E] .|\det [H_{T^{k_2}\omega}^{(N-k_2)} -E]|}
{|\det [H_\omega^{(N)} -E]|}\eqno{(2.9)}
$$
and also the formula
$$
\begin{aligned}
M_N(E, \omega)&=\prod^1_N \begin{pmatrix} E-\lambda\omega_n&-1\\ 1&0\end{pmatrix}\\[7pt]
&= \left
[\begin{matrix} \det [E-H_\omega^{(N)}] & -    \det [E-H_{T_\omega}^{(N-1)}\\[5pt]
                \det [E-H_{\omega}^{(N-1)}] & -\det [E-H_{T_\omega}^{(N-2)}]\end{matrix}\right].
\end{aligned}
\eqno{(2.10)}
$$
Using the above formalism, it is well-known how to derive from positivity of the Lyapounov exponent, bounds and decay estimates on the restricted Green's
functions.
Since ergodicity of the measure is used, application to the preceding requires to start from the $\nu_\alpha$.

For $E\in\mathbb R, \delta, c>0, M\in \mathbb Z_+$, define
$$
\begin{aligned}
\Omega_{E, \delta, c, M}&=\{\omega\in X; \Vert G_\omega^{(M)} (E)\Vert< e^{\delta M}
 \text { and } |G_\omega^{(M)} (E) (k, k')|< e^{-c|k-k'|}\\[10pt]
& \text { if } 1\leq k, k'\leq M \text { and } |k-k'|>\delta M\}.
\end{aligned}
\eqno{(2.11)}
$$

Fix $\alpha$ and $\delta>0$.
Then $E$ a.e
$$
\lim_{M\to\infty} \nu_\alpha(\Omega_{E, \delta, \frac 12\gamma_\alpha (E), M})=1.\eqno{(2.12)}
$$
Using Fubini arguments and (2.6), we derive the following

\begin{lemma}\label {Lemma5}
Given $\ve>0$, there is $b>0$, such that for all $\delta>0$, there is a subset $\mathcal E_\ve\subset\mathbb R$, mes\,$\mathcal E_\ve<\ve$ and some scale $M$
satisfying
$$
\nu(\Omega_{E, \delta, b, N})> 1-\ve \text { for } E\not\in \mathcal E_\ve \text{ and $N>M$}.\eqno{(2.13)}
$$
\end{lemma}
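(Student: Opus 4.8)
The plan is to integrate the single ergodic-component statement (2.12) over the decomposition (2.6); the two difficulties are that in (2.12) both the decay rate $\tfrac12\gamma_\alpha(E)$ and the scale beyond which it is effective depend on the pair $(\alpha,E)$. First I would dispose of large energies: since $|\omega_n|\le 1$, the spectra of all $H_\omega$ lie in a fixed compact interval $I$, and for $E\notin I$ the Lyapunov exponent is bounded below, so (2.11) holds trivially for a suitable $b$; hence it suffices to work on $[0,1]\times I$ with the product measure $d\alpha\otimes dE$. The key structural remark is that $\Omega_{E,\delta,c,M}$ is decreasing in $c$, so that whenever $b\le\tfrac12\gamma_\alpha(E)$ one has the inclusion $\Omega_{E,\delta,\frac12\gamma_\alpha(E),M}\subset\Omega_{E,\delta,b,M}$, and therefore $\nu_\alpha(\Omega_{E,\delta,b,M})\ge\nu_\alpha(\Omega_{E,\delta,\frac12\gamma_\alpha(E),M})$.

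Now fix $\ve>0$. Since $\gamma_\alpha(E)>0$ for a.e.\ $(\alpha,E)\in[0,1]\times I$ (this is exactly what was established above from Kotani's theorem and Lemma \ref{Lemma3}, the exceptional energies forming the null set $\mathcal E_*$), the sets $\{\gamma_\alpha(E)\ge 2b\}$ exhaust $[0,1]\times I$ up to measure $0$ as $b\downarrow 0$. I would thus choose $b>0$, \emph{depending only on $\ve$}, so small that
$$G:=\{(\alpha,E)\in[0,1]\times I:\ \gamma_\alpha(E)\ge 2b\}\quad\text{satisfies}\quad \text{mes}\big(([0,1]\times I)\setminus G\big)<\eta,$$
where $\eta=\eta(\ve)$ is fixed below. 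Crucially $b$ is extracted from the Lyapunov exponent alone and hence is independent of $\delta$, as the statement requires.

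With $b$ fixed, fix any $\delta>0$ and set $f_M(\alpha,E)=\nu_\alpha(\Omega_{E,\delta,b,M})$. By the monotonicity remark together with (2.12), and Fubini (to pass from the $\alpha$-by-$\alpha$, a.e.-$E$ convergence to joint a.e.\ convergence), one gets $f_M(\alpha,E)\to 1$ for a.e.\ $(\alpha,E)\in G$. Applying Egorov's theorem on the finite-measure set $G$ produces $G'\subset G$ with $\text{mes}(G\setminus G')<\eta$ and a single scale $M$ such that $f_N(\alpha,E)>1-\eta$ for all $(\alpha,E)\in G'$ and all $N>M$; this is the step that converts the $(\alpha,E)$-dependent rate into one working scale. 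Finally I would integrate in $\alpha$: since $\nu(\Omega_{E,\delta,b,N})=\int f_N(\alpha,E)\,d\alpha$ by (2.6), setting $h(E)=\text{mes}\{\alpha:(\alpha,E)\notin G'\}$ gives $\int_I h\,dE\le 2\eta$, so by Chebyshev the set $\mathcal E_\ve$ where $h(E)>\ve/2$ (together with $\mathcal E_*$) has measure $\le 4\eta/\ve$. Choosing $\eta<\ve^2/4$ yields $\text{mes}\,\mathcal E_\ve<\ve$, and for $E\notin\mathcal E_\ve$, $N>M$,
$$\nu(\Omega_{E,\delta,b,N})\ge(1-\eta)\,\text{mes}\{\alpha:(\alpha,E)\in G'\}\ge(1-\eta)(1-\ve/2)>1-\ve.$$
The main obstacle is precisely this middle uniformization: a.e.\ convergence in (2.12) carries no rate, so a device such as Egorov is essential in order to trade a uniformly small exceptional set in $(\alpha,E)$ for a single effective scale $M$; the remainder is bookkeeping with the ergodic decomposition and Chebyshev's inequality.
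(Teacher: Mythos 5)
Your proposal is correct and follows exactly the route the paper intends: the paper gives no written proof of Lemma \ref{Lemma5} beyond the phrase ``Using Fubini arguments and (2.6),'' and your argument --- choosing $b$ from the positivity of $\gamma_\alpha(E)$ alone (hence independent of $\delta$), using monotonicity of $\Omega_{E,\delta,c,M}$ in $c$, Egorov to extract a single scale $M$ from the pointwise convergence (2.12), and Chebyshev in $E$ after integrating over the ergodic decomposition --- is the standard and correct way to fill in that sketch. No gaps.
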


\noindent
{\bf(3).}
Using the definition of $\nu$, we re-express (2.13) in terms of the Moebius function.

Let $H$ be as in (1.0).
For $I\subset \mathbb Z_+$ an interval, denote
$$
H_I=R_IHR_I\eqno{(3.1)}
$$
and
$$
G_I(E)=(H_I-E+io)^{-1}.\eqno {(3.2)}
$$
Let $S=S_{E, \delta, N}$ be defined by
$$
\begin{aligned}
S&=\{k\in\mathbb Z; \Vert G_{[k, k+N[} (E)\Vert < e^{\delta N} \text {and } \\[10pt]
&|G_{[k, k+N[}(E) (k', k'')|
< \overline e^{b|k'-k''|} \text { if } k\leq k', k''\leq k +N, |k'-k''|> \delta
N.
\end{aligned}
\eqno{(3.3)}
$$

Property (2.13) then translates as follows
$$
\lim_{\substack{\ell\to \infty\\ \ell\gg N}} \  \frac 1\ell \ |S\cap [1, \ell]|>\frac 12\eqno{(3.4)}
$$
for $E\not\in\mathcal E_\ve$ and $N>M$.
Here `lim' refers to the Banach limit in the definition of $\nu$.

Fix $\ve>0$ a small number, take $0< b<\frac 1{10}$ as in Lemma \ref{Lemma5} and let $\delta =b^{10}$.
Let $\mathcal E_\ve\subset\mathbb R, M>\delta^{-2} +\frac 1\ve$, satisfy the lemma. Hence, from (3.4)
$$
\lim_{\substack{\ell\to\infty\\ \ell\gg M}} \ \frac 1\ell \ |S_{E, \delta, M} \cap [1, \ell]|>\frac 12 \text { for } E\not\in \mathcal E_\ve.\eqno{(3.5)}
$$
Choose $\ell\gg M$ such that
$$
\frac 1\ell |S_{E, \delta, M}\cap [1, \ell]|>\frac 12\text { for } E\not\in \mathcal E_\ve'
$$
where $\mathcal E_\ve \subset\mathcal E_\ve'\subset\mathbb R$ satisfies
$$
\text{mes\,} \mathcal E_\ve'< 2\ve.\eqno{(3.7)}
$$
Next we rely on a construction from \cite {B}, Lemma 6.1 and Corollary 6.54.  We recall the statement

\begin{lemma}\label{Lemma6}
Let $\underline{0< c_0< 1}$, $\underline{0 < c_1< \frac 1{10}}$ be constants, $\underline{0<\delta<c_1^{10}}$ and $\ell\gg M>\delta^{-2}$.

Let
$$
A=v_n\delta_{nn'} +\Delta \quad (1\leq n, n'\leq \ell)\eqno{(3.8)}
$$
(hence $A$ is an $\ell\times\ell$ matrix) with diagonal $v_n$ arbitrary, bounded, $|v_n|=0(1)$.

Let $\mathcal U\subset\mathbb R$ be a set of energies $E$ such that for each $E\in \mathcal U$, the following holds:

There is a collection $\{I_\alpha\}$ of disjoint intervals in $[1, \ell], |I_\alpha|=M$ such that for each $\alpha$
$$
\Vert(R_{I_\alpha}(A-E) R_{I_\alpha})^{-1}\Vert< e^{\delta M}\eqno{(3.9)}
$$
and
$$
|(R_{I_\alpha} (A-E) R_{I_\alpha})^{-1} (k, k')|< e^{-c_1|k-k'|} \text { for } k, k' \in I_\alpha, |k-k'|> \delta M\eqno{(3.10)}
$$
holds, and
$$
\sum_\alpha |I_\alpha|> c_0 \ell .\eqno{(3.11)}
$$
Then there is a set $\mathcal E''\subset\mathbb R$ so that
$$
\text{mes\,} (\mathcal E'')<\frac 1M\eqno{(3.12)}
$$
and for $E\in \mathcal U\backslash \mathcal E''$,
$$
\max_{\substack{1\leq x\leq \frac{c_0}{10} \ell\\ \ell\geq y\geq \ell -\frac {c_0}{10}\ell}}
|(A-E)^{-1} (x, y)|< e^{-\frac 18 c_0c_1\ell}.\eqno{(3.13)}
$$
\end{lemma}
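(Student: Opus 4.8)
The plan is to derive the global estimate (3.13) from the local bounds (3.9)--(3.10) by a transfer-matrix computation, using Cramer's rule (2.9) to translate between Green's functions and determinants. Writing $D_J=\det(E-A|_J)$ for an interval $J\subset[1,\ell]$, formula (2.9) gives, for $1\le x\le y\le\ell$,
$$|(A-E)^{-1}(x,y)|=\frac{|D_{[1,x-1]}|\,|D_{[y+1,\ell]}|}{|D_{[1,\ell]}|}.$$
Hence it suffices to show that, off a set $\mathcal E''$ of measure $<\tfrac1M$, the full determinant $|D_{[1,\ell]}|$ exceeds the product of the two edge determinants by a factor $e^{\frac18c_0c_1\ell}$. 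I emphasise that the edge determinants need not be small; the argument must instead let them cancel against the matching stretches inside $D_{[1,\ell]}$.

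The local input is that on a good interval $I_\alpha$ the endpoint case of (3.10) yields $|D_{I_\alpha}|=1/|G_{I_\alpha}(\text{endpoints})|>e^{c_1(M-1)}$, so the associated cocycle $M_{I_\alpha}(E)=\prod_{n\in I_\alpha}\big(\begin{smallmatrix}E-v_n&-1\\1&0\end{smallmatrix}\big)$ has $\|M_{I_\alpha}\|\ge e^{c_1(M-1)}$. I would factor $M_{[1,\ell]}$ as the ordered product of these good blocks and the transfer matrices over the complementary gaps (each of norm $\ge1$, since $\det=1$), and invoke the avalanche principle: absent near-cancellation between consecutive blocks, $\|M_{[1,\ell]}\|$ is comparable to the product of the block norms, and (off $\mathcal E''$) $|D_{[1,\ell]}|$ is in turn comparable to $\|M_{[1,\ell]}\|$. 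Combining this with the trivial submultiplicative upper bounds $|D_{[1,x-1]}|\le\prod_{\text{blocks}\subset[1,x-1]}\|M_{\text{block}}\|$ and likewise for $[y+1,\ell]$, the edge block-products cancel in the Cramer ratio, leaving only the blocks contained in $[x,y]$. Since $\sum_\alpha|I_\alpha|>c_0\ell$ and the two edge regions have length $\le\tfrac{c_0}{10}\ell$ each, the good intervals inside $[x,y]$ have total length $>\tfrac45c_0\ell$, so the surviving factor is $\ge e^{\frac45c_0c_1\ell}$ and the ratio is $\lesssim e^{-\frac45c_0c_1\ell}<e^{-\frac18c_0c_1\ell}$.

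I expect the decisive difficulty to be exactly the no-cancellation hypothesis underlying the avalanche step: the local largeness $\|M_{I_\alpha}\|\ge e^{c_1M}$ does not by itself force $\|M_{[1,\ell]}\|$ to be large, because the expanding direction of one block may align with the contracting direction of the next (a resonance), and such alignment destroys both the avalanche estimate and the comparison $|D_{[1,\ell]}|\sim\|M_{[1,\ell]}\|$. Excluding these energies is the entire content of $\mathcal E''$. To bound its measure I would note that each near-cancellation, as well as the event $\{|D_{[1,\ell]}|<e^{-s}\|M_{[1,\ell]}\|\}$, is a sublevel set of an entire function of $E$ (a polynomial of degree $\le\ell$), hence has exponentially small measure by a Cartan/Jensen estimate; summing over the $\lesssim\ell/M$ junctions still leaves total measure $<\tfrac1M$, since $\ell\gg M$ provides ample exponential room. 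Equivalently, one may run the whole argument through the resolvent identity, decoupling the good intervals one at a time: the backward terms then resum into denominators of the form $1-G_{I_\alpha}(\cdot)\,G_{\text{adjacent block}}(\cdot)$, whose possible smallness is the same resonance phenomenon and carves out the same $\mathcal E''$.
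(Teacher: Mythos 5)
First, a point of comparison: the paper contains no proof of Lemma \ref{Lemma6}. It is quoted from \cite{B} (Lemma 6.1 and Corollary 6.54), with only the remark that the argument ``uses nothing more than the resolvent identity and energy perturbation.'' So your proposal is being measured against that indicated route rather than a written argument. The parts of your sketch that are sound: the reduction via Cramer's rule (2.9), the lower bound $|D_{I_\alpha}|=1/|G_{I_\alpha}(E)(\text{endpoints})|>e^{c_1(M-1)}$ from the endpoint case of (3.10), the count showing the good intervals inside $[x,y]$ carry length $>\frac45 c_0\ell$, and the correct identification of block resonances as the content of $\mathcal E''$.

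The genuine gap is the measure estimate for $\mathcal E''$, and it is quantitative, not cosmetic. A junction event involves the transfer matrices of two adjacent blocks, i.e.\ (entries of) polynomials in $E$ of degree $O(M)$; Cartan/Remez for a monic polynomial $p$ of degree $n$ gives $\text{mes}\{|p|<\eta\}\lesssim\eta^{1/n}$, so excluding a near-cancellation at threshold $e^{-\sigma}$ costs a set of measure $\sim e^{-\sigma/M}$. To make the union over the $\sim\ell/M$ junctions have measure $<\frac1M$ you must take $\sigma\gtrsim M\log\ell$, and the cumulative loss in the avalanche estimate is then $e^{(\ell/M)\cdot M\log\ell}=e^{\ell\log\ell}$, which swamps the gain $e^{\frac45 c_0c_1\ell}$; with the actual parameters ($c_1<\frac1{10}$, degree $\sim 2M$) the per-junction Cartan bound is not even $<1$. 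In the ergodic setting this circle is broken by a large-deviation theorem for $\frac1N\log\Vert M_N\Vert$, which is exactly what is \emph{not} available for an arbitrary bounded deterministic $v_n$. There is also a structural obstruction: the avalanche principle needs every factor to have norm $\geq\mu$ with $\mu$ large compared to the number of factors, which fails for the gap blocks (norm possibly $\approx 1$, since only a $c_0$-fraction of $[1,\ell]$ is covered by good intervals) and, since $\ell/M$ is unbounded, can fail even for the good blocks once $\ell$ is super-exponential in $M$. The route the paper points to sidesteps junction-by-junction exclusions entirely: one removes a \emph{single} set where $\text{dist}(E,\text{spec}\,A)$ is exponentially small --- of measure $\leq 2\ell e^{-K}\ll\frac1M$ by eigenvalue counting, with $K$ a small multiple of $\ell$ so the a priori bound $\Vert(A-E)^{-1}\Vert\leq e^{K}$ is affordable --- and then chains the resolvent identity through the good intervals, each crossing contributing $e^{-c_1(1-2\delta)M}$ from (3.10). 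Your closing sentence gestures at this reformulation but dismisses it as ``the same resonance phenomenon,'' whereas it is precisely the step at which the two arguments part ways and only one of them closes.
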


The proof of Lemma \ref{Lemma6} is a bit technical, but uses nothing
more than the resolvent identity and energy perturbation.

Let $v_n=\lambda\mu (n)$.

Take $c_0=\frac 12, c_1=b, \mathcal U=\mathbb R\backslash \mathcal E_\ve'$ with $\mathcal E_\ve'$ as above:

Let $\ell_0 \gg M$ satisfy (3.6).
From the definition (3.3) of $S_{E, \delta, M}$ and (3.6), we clearly obtain a collection $\{I_\alpha\}$ of $M$-intervals in $[1, \ell]$ such that
(3.9)-(3.11) hold.

It follows that for $E$ outside of the set $\mathcal E_\ve''=\mathcal E_\ve'\cup\mathcal E''$ of measure at most $2\ve+\frac 1M<3\ve$, one has for $b'\sim b$ that
$$
\max_{\substack {1\leq x\leq \frac{c_0}{10} \ell\\ \ell \geq y\geq \ell -\frac {c_0}{10}\ell}}|G_{[1, \ell]} (E) (x, y)|<e^{-b'\ell}.\eqno{(3.14)}
$$
Note that $b'>0$ depends on $\ve$ and $\nu$ and $\mathcal E_\ve''$ depends on $\ell$, which can be taken arbitrarily large in the subsequence of $\mathbb Z_+$
used to define $\nu$.  Since this subsequence is arbitrary, it follows that there is some $b'=b_\ve$ and $\ell_\ve\in\mathbb Z_+$
such that for $\ell> \ell_\ve$
$$
\text{mes\,} [E\in\mathbb R; \max_{\substack { 1\leq x\leq \frac{c_0}{10}\ell\\ \ell\geq y\geq \ell -\frac{c_0}{10}\ell}} |G_{[1, \ell]} (E) (x, y)| >
e^{-b'\ell}]=\text { mes\,} \tilde{\mathcal E}_\ell < \ve.
\eqno{(3.15)}
$$
Assume $\psi= (\psi_n)_{n\geq 0}, \psi_0=0$ a solution of
$$
H\psi = E\psi.
$$
Taking $\ell$ large, one has by projection
$$
H_{[1, \ell]} \psi^{(\ell)}+ \psi_{\ell+1} e_\ell = E\psi^{(\ell)}\eqno{(3.16)}
$$
where $\psi^{(\ell)} =\sum_{1\leq x\leq \ell}\psi_x e_x, \{e_x\}$ the unit vector basis.

Hence
$$
\psi^{(\ell)} =- \psi_{\ell+1} G_{[1, \ell]} (E) e_\ell
$$
and fixing some coordinate $x\geq 1$, for $\ell$ large enough
$$
|\psi_x|\leq |\psi_{\ell+1}| \ |G_{[1, \ell]} (E) (x, \ell)|.\eqno{(3.17)}
$$
Take $x$ with $\psi_x\not= 0$.
Assuming
$$\overline{\lim_n} \ \frac{\log^+|\psi_n|}n=0
$$
it follows from (3.17) that
$$
\overline{\lim_\ell} \ \frac 1\ell \  \log^+ |G_{[1, \ell]} (E) (x, \ell)|^{-1} =0.\eqno{(3.18)}
$$
From the definition of $\tilde {\mathcal E}_\ell$ in (3.15), this means that
$$
E\in\bigcup_{\ell_0} \bigcap_{\ell\geq \ell_0} \tilde{\mathcal E_\ell}\eqno{(3.19)}
$$
which is a set of measure $\leq \ve$.

Letting $\ve\to 0$,  Theorem \ref{Theorem1} follows.

\noindent
{\bf (4).} Taking into account the comment made prior to Lemma \ref{Lemma2}, our argument gives the following more general result,
that can be viewed as a refinement of \cite{R}.

\begin{theorem} \label{Theorem2}
Suppose that the (half line) potential $(V_n)_{n\geq 0}$
takes only finitely many values and satisfies the following property
$$
\operatornamewithlimits{\overline\lim}\limits_{r \to\infty} \ \operatornamewithlimits{\overline\lim}\limits_{N\to \infty}
\frac 1N  | \{1\leq k\leq N; V_k=\omega_0, V_{k+1} =\omega_1, \ldots, V_{k+r}= \omega_r\}|=0
\eqno{(4.1)}
$$
whenever  $\overline\omega=(\omega_r)_{r\geq 0}$ is a periodic sequence in the pointwise closure of the sequences $(V_{n+j})_{n\in\mathbb Z_+}
\quad (j\in\mathbb Z_+)$.

Then the Schr\"odinger operator $H=\Delta+V$ satisfies the conclusion of Theorem \ref{Theorem1}.
\end{theorem}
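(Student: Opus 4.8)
The plan is to rerun the construction and estimates of parts (2) and (3) essentially verbatim, noting that the Moebius function entered those arguments \emph{only} through Lemma \ref{Lemma2}, and then to show that hypothesis (4.1) is exactly what recovers the conclusion of Lemma \ref{Lemma3} in the present generality. First I would extend $V$ to $\mathbb Z$ by setting $\bar V_n=V_n$ for $n\geq 0$ and $\bar V_n=0$ for $n<0$, form $X$ as the pointwise closure of the orbit $\{T^j\bar V; j\in\mathbb Z\}$, and take $\nu\in\mathcal P(X)$ to be a weak$^*$-limit point of the empirical averages $\frac1N\sum_{j=0}^{N-1}\delta_{T^j\bar V}$, just as in part (2). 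Since $V$ is bounded (it takes finitely many values), the entire apparatus of (2.4)--(3.19) --- the ergodic decomposition (2.6), Kotani's theorem (Proposition \ref{Proposition4}), the Green's function estimates of Lemmas \ref{Lemma5} and \ref{Lemma6}, and the final growth argument (3.16)--(3.19) --- applies word for word once the analogue of Lemma \ref{Lemma3} is in hand. Thus the only thing to prove is that $\sigma_{ac}(H_\omega)=\phi$ for $\nu$-a.e.\ $\omega$.

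By Proposition \ref{Proposition1}, applied to the two half-line restrictions $H^\pm_\omega$, this reduces to showing that the set of $\omega\in X$ for which $(\omega_n)_{n\geq 0}$ is eventually periodic is $\nu$-null, and likewise for $(\omega_n)_{n\leq 0}$. This is the heart of the matter and the step where (4.1) is used. For a fixed period $d$, let $A_{n_0}=\{\omega:\omega_{n+d}=\omega_n \text{ for all } n\geq n_0\}$; the relation $T^{-1}A_{n_0}=A_{n_0+1}$ together with $T$-invariance of $\nu$ gives $\nu(A_{n_0})=\nu(A_0)$ for every $n_0$, and since the $A_{n_0}$ increase to the set $P_d$ of sequences of eventual period $d$, one gets $\nu(P_d)=\nu(A_0)$; that is, eventual $d$-periodicity has the same measure as pure $d$-periodicity on the positive half-line. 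Now $A_0$ is the finite disjoint union, over the finitely many $d$-periodic words $\bar\sigma$ in the finite alphabet, of the sets $\{\omega:(\omega_n)_{n\geq 0}=\bar\sigma\}$, and the measure of each is $\lim_{r\to\infty}\nu(\{\omega_0=\sigma_0,\dots,\omega_r=\sigma_r\})$, which by construction of $\nu$ is bounded above by the frequency $\overline{\lim}_N\frac1N|\{1\leq k\leq N:V_k=\sigma_0,\dots,V_{k+r}=\sigma_r\}|$. Any such $\bar\sigma$ of positive measure would appear with positive frequency for every $r$, hence would be a periodic element of the pointwise closure of $\{(V_{n+j})_{n\in\mathbb Z_+}\}$, and then (4.1) would force these frequencies, and so the measure, to $0$ --- a contradiction. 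Hence $\nu(A_0)=0$, so $\nu(P_d)=0$, and summing over the countably many $d$ shows the eventually periodic set is $\nu$-null; the two-sided version follows identically, using $T$-invariance to transport negative-index patterns to positive-index ones.

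With the analogue of Lemma \ref{Lemma3} established, the argument of part (2) yields $\gamma_\alpha(E)>0$ for a.e.\ $\alpha$ and a.e.\ $E$ as in (2.8), and the argument of part (3) --- with $v_n=V_n$ in Lemma \ref{Lemma6} in place of $\lambda\mu(n)$, which is legitimate since that lemma allows an arbitrary bounded diagonal --- produces the exponential decay (3.14)--(3.15) of the restricted Green's function off an energy set of measure $<\ve$, whence the conclusion (1.1) exactly as in (3.16)--(3.19).

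The hard part will be the measure-theoretic step of the second paragraph. One must verify both that the reduction from \emph{eventual} to \emph{pure} periodicity is legitimate --- this is the $T$-invariance computation, which incidentally absorbs the degenerate ``eventually constant/zero'' cases that appeared separately as (2.5) in the Moebius proof --- and, crucially, that every periodic word of positive frequency genuinely lies in the pointwise closure named in the hypothesis, so that (4.1) is applicable to it. Once this identification of cylinder measures with pattern frequencies is pinned down, everything downstream is an unchanged replay of parts (2) and (3).
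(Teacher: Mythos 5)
Your proposal is correct and follows the paper's intended route: the paper offers no separate proof of Theorem \ref{Theorem2} beyond the remark that only Lemma \ref{Lemma2} used Moebius-specific input, and your argument reruns parts (2) and (3) verbatim while substituting hypothesis (4.1) for Lemma \ref{Lemma2}. The one step you work out in detail --- identifying cylinder measures of $\nu$ with pattern frequencies and using $T$-invariance to pass from eventual to pure periodicity, so that (4.1) yields the analogue of Lemma \ref{Lemma3} via Proposition \ref{Proposition1} --- is exactly the link the paper leaves implicit, and you carry it out correctly.
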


\noindent {\bf Acknowledgement.} The author is grateful to P.~Sarnak
for bringing the problem to his attention and several discussions.

\end{document}